\documentclass{amsart}
\usepackage{amsmath,amsthm,amssymb}
\usepackage{lipsum} 
\usepackage[utf8]{inputenc}
\usepackage{esint}
\usepackage{comment}
\usepackage[all]{xy}
\usepackage{amsfonts} 
\usepackage{xcolor}
\usepackage{tikz}
\usepackage[colorlinks=true]{hyperref}
\usepackage{bbold}
\usepackage{enumerate}
\usepackage{calc}
\usepackage{bbold}
\usepackage[colorinlistoftodos,prependcaption,textsize=tiny]{todonotes}
\usepackage{todonotes}
\usepackage[shortcuts]{extdash}
\usepackage[style=alphabetic,maxnames=200,maxalphanames=6,giveninits]{biblatex}
\makeatletter
\numberwithin{equation}{section}
\theoremstyle{plain}
        \newtheorem{theorem}{Theorem}[section]

        \newtheorem{lemma}[theorem]{Lemma}

        \newtheorem{remark}[theorem]{Remark}

\newtheorem*{theorem*}{Theorem}
\newtheorem*{definition*}{Definition}
\newtheorem*{proposition*}{Proposition}

\newcommand{\R}{\mathbb{R}}

\newcommand{\bg}{\mathbb{g}}

\newcommand{\bG}{\mathbb{G}}

\newcommand{\Id}{\mathsf{id}}

\newcommand{\wn}{{\widetilde\nabla}}

\newcommand{\bbnabla}{%
    \,\nabla\mkern-12mu\nabla_{i,j}%
}

\newcommand{\Do}{\R^{2d}}

\def\aS{\mathsf S}
\def\aZ{\mathsf Z}
\def\az{\mathsf z}

\def\aJ{\mathsf J}

\newcommand{\Tau}{\mathcal{T}}

\def\QB{Q^{\mathsf{B}}}
\def\QL{Q^{\mathsf{L}}}

\def\DB{\cD^{\mathsf{B}}}
\def\DL{\cD^{\mathsf{L}}}

\def\HL{\cH_{\mathsf{linear}}}

\newcommand{\cD}{\mathcal{D}}
\newcommand{\cH}{\mathcal{H}}

\newcommand{\cB}{\mathcal{B}}

\newcommand{\cP}{\mathbb{P}}

\newcommand{\aB}{\mathsf B}
\newcommand{\aL}{\mathsf L}
\newcommand{\aM}{\mathsf M}
\newcommand{\aaE}{\mathsf E}

\newcommand{\G}{\R^{3d}}
\newcommand\supp{\operatorname{Supp}}
\renewcommand{\d}{\partial} 
\newcommand{\defeq }{\mathop{=}\limits^{\textrm{def}}}
\newcommand{\dd}{{\,\rm d}}

\title{Multi-species kinetic models: GENERIC formulation and Fisher information}

\author[H.~Duong]{Manh Hong Duong}
\author[Z.~He]{Zihui He}
\address[H.~Duong]
{School of Mathematics, University of Birmingham, UK}
\email{h.duong@bham.ac.uk}
\address[Z.~He]
{Fakult\"at f\"ur Mathematik, Universit\"at Bielefeld, Postfach 100131, 33501 Bielefeld, Germany}
\email{zihui.he@uni-bielefeld.de}

\date{\today}

\begin{document}

\begin{abstract}
In this paper, we study the GENERIC structures of multi-species spatially inhomogeneous Boltzmann and Landau equation with Bose–Einstein, Maxwell–Boltzmann, and Fermi–Dirac statistics. In addition, under suitable assumptions on the collision kernels, we show that the Fisher information for the multi-species spatially homogeneous Boltzmann equation is non-increasing in time.
\end{abstract}

\maketitle
\section{Introduction}
In this paper we study the 
\textit{multi-species Boltzmann and Landau equations}. Our aim is twofold: firstly, we formulate these systems into the so-called \textit{GENERIC framework}, which is a well-established framework for non-equilibirum systems, and secondly we show that the Fisher information of the spatially homogeneous Boltzamnn equation is non-increasing in time. 
\subsection{Multi-species Boltzmann equations} We consider the multi-species Boltzmann equation describing the evolution of the densities of $N$ species of particles of different types representing Bose/ Maxwell/Fermi statistics:
\begin{equation}
\label{eq: sBE}
\partial_t f_i+v\cdot\nabla_x f_i=\QB_i(F,F),
\end{equation}
where $f_i=f_i(t,x,v)$ for  $i=1,\ldots, N$ denotes the density of the i-th species at time $t$, position $x\in\R^d$ and velocity $v\in\R^d$, and $F(t,x,v)=F=(f_1,\dots,f_N)^T$.

The left-hand side describes the advection of the density under free-transport of the particles. The collision term $\QB_i(F,F)$ on the right-hand side, which depends on all the species, is given by
\begin{equation}
\label{B:Q-ij} 
\begin{aligned}
\QB_{i}(F,F)&=\sum_{j=1}^N\QB_{ij}(F,F),\\
\QB_{ij}(F,F)&=\int_{\R^d\times S^{d-1}}B_{ij}
\big(f_i'f_{j}'\tau_i(f_i)\tau_j(f_{j})-f_if_j\tau_i(f_i')\tau_j(f_{j}')\big)\dd v_j\dd \omega, 
\end{aligned}
\end{equation}
where 
\begin{align*}
    \tau_i(f)=1+\alpha_i f.
\end{align*}
The parameter $\alpha_i$ encodes the type of particles:
\begin{equation}
\alpha_i=\begin{cases}
1\quad\text{Bose particles},\\
-1\quad\text{Fermi particles},\\
0\quad\text{Maxwell particles}.
\end{cases}
\end{equation}
We use the subscript $i$ to indicate that different species may be of different type of particles \cite{escobedo2003homogeneous}.

We can write the system \eqref{eq: sBE} in a vector form as follows
\begin{equation}
\label{IBE}
\begin{aligned}
\d_t F+v\cdot \nabla_x F=\QB(F,F),\quad (t,x,v)\in [0,T]\times \Do,
\end{aligned}
\end{equation}
where $\QB=(\QB_1,\ldots, \QB_N)^T$. The collision operator $Q_{ij}^B$ captures interactions between particles between $i$-th and $j$-th species, which is described below. 

Let $m_i$ denote the conserved mass of the i-th species.
Let $v_i$ and $v_j$ denote the pre-collision velocities of particles from species $i$ and $j$. The post-collision velocities $v_i'$ and $v_j'$ are given by 
\begin{align}
\label{post-p}   v_i'&=\frac{m_iv_i+m_jv_j+m_j|v_i-v_j|\omega}{m_i+m_j},\quad v_j'=\frac{m_iv_i+m_jv_j-m_i|v_i-v_j|\omega}{m_i+m_j}
\end{align}
for some $\omega\in S^{d-1}$. 
For a single collision, the following momentum and energy conservation laws hold
\begin{equation}
\label{B:cl}
m_iv_i+m_j v_j'=m_iv_i+m_j v_j,\quad m_i|v_i'|^2+m_j |v_j'|^2=m_i|v_i|^2+m_j |v_j|^2.
\end{equation}

We write  
\begin{align*}
f_i=f_i(x,v_i),\quad f'_{i}=f_i(x,v_i')\quad \text{and}\quad 
\tau_i=\tau_i(f_i),\quad 
\tau_i'=\tau_i(f_i').
\end{align*}

Let the collision kernel $B=(B_{ij})\in\R^{N\times N}$ be a symmetric matrix taking the following form
\begin{equation}
\label{B}
    B_{ij}(|v_i-v_j|,\omega)=\alpha_{ij}\big(|v_i-v_j|\big)b_{ij}(\theta_{ij}),\quad \theta_{ij}=\arccos\frac{(v_i-v_j\cdot \omega)}{|v_i-v_j|},
\end{equation}
where the functions $\alpha_{ij},\,b_{ij}:\R\to\R_+$ are symmetric, i.e. $\alpha_{ij}=\alpha_{ji}$ and $b_{ij}=b_{ji}$ . In the following, we do not distinguish the notations $b(\theta)$ and $b(\cos \theta)$. Notice that $ B_{ij}(|v_i-v_j|,\omega)= B_{ij}(|v_i'-v_j'|,\omega)$.
Notice that, for the single piece case ($N=1$), we recover the (quantum) Boltzmann equation.

We refer to \cite{briant2016boltzmann} and references therein for more information about the multi-species Boltzmann equation. 
\subsection{Multi-species Landau equations}
The multi-species Landau equation can be written as 
\begin{equation}
\label{ILE}
\begin{aligned}
\d_t f_i+v\cdot \nabla_x f_i=\QL_i(F,F),\quad (t,x,v)\in [0,T]\times \Do,
\end{aligned}
\end{equation}
where the Landau collision operator is given by  $Q_L=(\QL_{1},\dots,\QL_{N})^T$, where 
\begin{equation*}
\label{cla:Q-L}
\begin{aligned}
\QL_{i}(F,F)&=\sum_{j=1}^N\QL_{ij}(F,F),\\
\QL_{ij}(F,F)&=\nabla_{v_i}\cdot \int_{\R^d}A_{ij}\Pi_{ij}
\Big(\frac{1}{m_i}f_{j}\tau_j\nabla_{v_i} f_i-\frac{1}{m_j}f_i\tau_i(\nabla_{v_j} f_{j})\Big)\dd v_j.    
\end{aligned}
\end{equation*}
We write $\tau_i=\tau_i(f_i)$.
In the above, $\Pi=(\Pi_{ij})\in\R^{N\times N}$ denotes the projection matrix given by 
\begin{align*}
 \Pi_{ij}=\Id_d-\frac{(v_i-v_j)\otimes (v_i-v_j)}{|v_i-v_j|^2}.
\end{align*}

The collision kernel $A=(A_{ij})\in \R^{N\times N}$ satisfies $A_{ij}\ge 0$ and symmetric in the sense that
\begin{equation*}
m_iA^{ij}=m_jA^{ji}\quad\text{for all}\quad i,j=1,\dots,N.
\end{equation*}
The grazing limit of the multi-species Boltzmann equation \eqref{IBE} leads to the Landau equation \eqref{ILE}, in this case, the collision kernel $A$ is given by 
\begin{equation}
\label{A}
 A_{ij}=\frac{|v_i-v_j|^2\alpha_{ij}(|v_i-v_j|)}{m_i}.
\end{equation}
For completeness, we sketch the grazing limit in Appendix \ref{sec:GL} and refer to \cite{gualdani2017spectral} and references therein for more information about the multi-species Landau equation.

The first aim of the present paper is to show that the multi-species Boltzmann and Landau equations can be both formulated in the so-called GENERIC framework of non-equilibrium thermodynamics. We recall this framework next.  
\subsection{GENERIC framework}
In non-equilibrium thermodynamics, 
GENERIC (General Equation for Non-Equilibrium Reversible-Irreversible Coupling), which was first introduced in the context of complex fluids  \cite{GO97a,GO97b}, describes a class of thermodynamic evolution equations for  systems with both reversible and irreversible dynamics.

In a nutshell, a GENERIC equation with building blocks $\{\aL,\aM,\aS,\aaE\}$ describing the evolution of an unknown $\az$ in a state space $\mathsf Z$ is given by
\begin{equation}
\label{eq:generic}
\partial_t \az = \underbrace{\aL \dd\aaE}_{\text{reversible dynamics}}+\underbrace{\aM \dd\aS}_{\text{irreversible dynamics}}.
\end{equation}
Here, $\aL$ is an antisymmetric operator satisfying the Jacobi identity, while $\aM$ is a symmetric and positive semi-definite operator. The functionals $\aaE$ and $\aS$ denote the energy and entropy, respectively, and $\dd\aaE$, $\dd\aS$ are their appropriate differentials. The following degeneracy (non-interaction) conditions are satisfied
\begin{equation*}
\label{eq:degen}
\aL(\az) \dd \aS(\az)=0\quad\text{and}\quad \aM(\az) \dd\aaE(\az) = 0\quad\text{for all }\az.
\end{equation*}
These conditions ensure that, for any solution of \eqref{eq:generic}, the energy $\aaE$ is conserved while the entropy $\aS$ is non-decreasing. In other words, the first and second laws of thermodynamics are automatically satisfied for GENERIC systems.
The purely dissipative part of the dynamics is commonly referred to as a gradient flow. We refer to \cite{Ott05,pavelka2018multiscale} for further details on the GENERIC framework.
\subsection{Fisher information}
The Fisher information plays a crucial role in the study of kinetic equations \cite{Vil25}. In recent breakthroughs \cite{GS25,ISV26}, it is shown that the Fisher information is monotone decreasing in time along solutions, which imply the global existence of smooth solutions,  of the space-homogeneous Boltzmann and Landau equations. 

The Fisher information for the spatially homogeneous multi-species Boltzmann equation is given by
\begin{equation}
\label{Fisher-cI-F}
\mathcal{I}(F)\defeq\sum_{i=1}^NI(f_i),\quad I(f)\defeq\int_{\R^d}|\nabla \log f|^2 f\dd v.
\end{equation}
Motivated by and by employing the techniques of \cite{GS25,ISV26}, as the second aim of the paper, in Theorem \ref{thm:Fisher}, we will show the monotonicity of $\mathcal{I}$ along smooth solutions of spatially homogeneous multi-species Boltzmann equations.

\subsection{Related works}
The Boltzmann and Landau equations are fundamental models in kinetic theory. There has been a huge literature, both in physics and mathematics, on these equations physics. Multispecies Boltzmann and Landau equations, as well as related kinetic models, have studied in, for instance, \cite{jüngel2025crossdiffusionlimitsmultispecieskinetic,CHV24,BD16,daus2016hypocoercivity}. In particular, a two-species Boltzmann equation describing interactions between Bose–-Einstein and Fermi–-Dirac particles was studied in \cite{escobedo2003homogeneous}. 
We refer the reader to the monograph \cite{Cer88} and survey article \cite{villani2002review} for more information about the two equations. We now discuss recent works on the GENERIC structure and Fisher information of the \textit{single-species} Boltzmann and Landau equations (i.e., with $N=1$) that are directly related to this paper.

The GENERIC formulation for the spatial inhomogeneous Boltzmann equation has been shown formally in \cite{Ott97,grmela2018generic}, and it is rigorously studied in spatial homogeneous and delocalised cases in \cite{Erb23,EH25}. The GENERIC structure for the spatial homogeneous and delocalised cases  are studied in \cite{carrillo2024landau,DuongHe2025}. The grazing limit via variational structures are studied in \cite{carrillo2022boltzmann,DGH25}.

As already mentioned earlier in the previous section, in \cite{GS25,ISV26}, it is shown that the Fisher information for the single-species Landau and Boltzmann equations decays in time under appropriate collision kernels. These estimates play an important role in the study of well-posedness, asymptotic behaviour and stability. The decay of Fisher information of multi-species Landau equations have been established recently in \cite{JWY25,Zhu25}. 
\subsection{Organization of the paper}

 In Section \ref{sec:main}, we show the GENERIC building blocks for multi-species Boltzmann and Landau equations, and the grazing limit. In Section \ref{sec:Fisher}, we show that the Fisher information of the spatially homogeneous multi-species Boltzmann equation also decays in time under appropriate collision kernels. The grazing limit from the multi-species Boltzmann equation to the multi-species Landau equation is sketched in the appendix.

\subsection*{Acknowledgements}
M. H. D is funded by an EPSRC Standard Grant EP/Y008561/1. Z.~H. is funded by the Deutsche Forschungsgemeinschaft (DFG, German Research Foundation) – Project-ID 317210226 – SFB 1283.

\section{GENERIC structure of the multi-species  Boltzmann and Landau equations}\label{sec:main}
\subsection{GENERIC structure of the multi-species  Boltzmann equation}
In this section, we formulate the multi-species Boltzmann equation \eqref{eq: sBE} into the GENERIC framework. To this end, we need to generalize the definition of discrete gradient and divergence operators introduced in \cite{EH25} for the single-species system to the multi-species one.

For $\Phi=(\phi_1,\dots,\phi_N)^T$, $\phi_i=\phi(x,v_i)$, we define the Boltzmann gradient $\overline\nabla:\R^N\to \R^{N\times N}$ as follows
\begin{align*}
    \overline\nabla  \Phi=\big(\overline\nabla^{ij}\Phi\big)_{i,j=1}^N\quad \text{and}\quad \overline\nabla^{ij}\Phi=\phi_{j}'+\phi_i'-\phi_{j}-\phi_i. 
\end{align*}

For any $\Phi\in\R^N$ and $G=\big(g_{ij}\big)_{i,j=1}^N\in\R^{N\times N}$, $g_{ij}=g_{ij}(x,v_i,v_j,v_i',v_j')$, the following integration by parts formula holds
\begin{align}
\sum_{i,j=1}^N\int_{\G\times S^{d-1}}g_{ij}  \overline\nabla^{ij}\Phi\dd\omega\dd v_i\dd v_j\dd x=-\sum_{i=1}^N\int_{\Do}\overline\nabla^i\cdot G \phi_i\dd v_i\dd x. \label{B-M:IP}
\end{align}
Then $\overline \nabla\cdot :\R^{N\times N}\to\R^N$ is given by
\begin{gather*}
\overline \nabla \cdot G=\big(\overline \nabla^1 \cdot G,\dots,\overline \nabla^N \cdot G\big)^T,\\
\overline \nabla^i\cdot G=\sum_{j=1}^N\int_{\R^d\times S^{d-1}} g_{ij}+g^*_{ij}-g_{ij}'-(g_{ij}')^*\dd \omega\dd v_j,
\end{gather*}
where we write 
\begin{gather*}
 g_{ij}=g_{ij}(x,v_i,v_j,v_i',v_j'),\quad g^*_{ij}=g_{ij}(x,v_j,v_i,v_j',v_i'),\\
 g_{ij}'=g_{ij}(x,v_i',v_j',v_i,v_j),\quad
 (g_{ij}')^*=g_{ij}(x,v_j',v_i',v_j,v_i).
\end{gather*}
Let $\Lambda(s,t)=\frac{s-t}{\log s-\log t}$, $s,t>0$, denote the logarithm mean. We define the matrix
\begin{align*}
    \Lambda(F)=\big(\Lambda_{ij}(F)\big)_{i,j=1}^N\in \R^{N\times N}\quad\text{and}\quad \Lambda_{ij}(F)=\Lambda\big(\tau_j'\tau_i',\tau_j \tau_i\big).
\end{align*}

For $i$th-spice, the entropy function is given by 
\begin{equation*}
\label{h_i}
 h_i(f)=f\log f-\tau_i(f)\log\tau_i(f)-f\mathbb{1}_{\alpha_i=0}.
\end{equation*}
In the Fermi case $\alpha_i=-1$, we take $h_i(f)=+\infty$ if $f\notin[0,1]$. Let $\cH(f)$ denote the entropy functional
\begin{equation*}
\label{H:i}
\cH_i(f)=\int_{\Do}h_i(f)\dd x\dd v
\end{equation*}
in the case that $\max\big(h_i'(f),0\big)$ is integrable. Otherwise, we take $\cH_i(f)=+\infty$.
For the vector-valued function $F$, we define the entropy functional
\begin{equation}
\label{H}
\cH(F)=\sum_{i=1}^N\cH_i(f_i).
\end{equation}
Let $\dd\cH(F)=\big(\dd\cH_1(f_1),\dots,\dd\cH_N(f_N)\big)^T$ denote the functional differentiation, and 
\begin{equation}
\dd\cH_i(f)=h_i'(f)=\log \frac{f}{\tau_i(f)}.
\end{equation}

The Boltzmann collision operator \eqref{B:Q-ij} $Q^{ij}_B(F,F)$ can be written as 
\begin{align*}
\QB_{ij}(F,F)&=\int_{\R^d\times S^{d-1}}B_{ij}\Lambda^{ij}(F) \overline\nabla^{ij}\dd\cH(F)\dd \omega\dd v_*.
\end{align*}
For matrices $A=\big(a^{ij}\big)$ and $ B=\big(b_{ij}\big)\in \R^{N\times N}$, we use the notation of the Hadamard product
\begin{align*}
A\circ B=\big(a_{ij}b_{ij}\big)_{i,j=1}^N\in \R^{N\times N}.
\end{align*}
Then the Boltzmann equation \eqref{IBE} can be written as
\begin{align*}
\d_tF+v\cdot\nabla_xF=\frac14\overline\nabla\cdot\big(B\circ \Lambda\circ \overline\nabla\dd\cH( F)\big).
\end{align*}

By using of the integration by parts formula \eqref{B-M:IP}, the following weak formulation holds for the Boltzmann equation \eqref{IBE}
{\footnotesize\begin{equation}
    \label{weak-B}
    \begin{aligned}
        \int_{\Do}\Phi_0\cdot F_0-\int_{0}^T\int_{\Do}F\cdot (\d_t+v\cdot\nabla_x)\Phi =-\frac14\int_{0}^T\int_{\G\times S^{d-1}}\overline\nabla\Phi:\big(B\circ \Lambda\circ \overline\nabla \dd\cH( F)\big)
    \end{aligned}
\end{equation}}
for all test functions $\Phi\in\R^N$. For matrices $A=\big(a_{ij}\big)$ and $B=\big(b_{ij}\big)\in \R^{N\times N}$, we define $A:B:=\sum_{i=1}^Na_{ij}b_{ij}$. 

One can also check \eqref{weak-B} straightforwardly by adding up the weak $\QB_{ij}$ and $\QB_{ji}$ terms
\begin{equation}
\label{check}
\begin{aligned}
\int_{\Do}\phi^i\QB_{ij}(F,F)\dd x\dd v_i&=\int_{\G\times S^{d-1}}B_{ij}(\phi_i'-\phi_i)f_if_j\tau_i'\tau_j'\dd\omega\dd v_i\dd v_j\dd x,\\
\int_{\Do}\phi^j\QB_{ji}(F,F)\dd x\dd v_j&=\int_{\G\times S^{d-1}}B_{ji}\big(\phi_{j}'-\phi_{j}\big)f_jf_i\tau_j'\tau_i'\dd\omega\dd v_i\dd v_j\dd x
\end{aligned}
\end{equation}
and by using the symmetrical $B_{ij}=B_{ji}$, we have 
\begin{align*}
&\int_{\Do}\phi^i\QB_{ij}(F,F)\dd v_i\dd x+\int_{\Do}\phi^j\QB_{ji}(F,F)\dd x\dd v_j\\
=&{}\int_{\G\times S^{d-1}}B_{ij}\big(\phi_{j}'+\phi_i'-\phi_{j}-\phi_i\big)f_if_j\tau_i'\tau_j'\dd\omega\dd v_i\dd v_j\dd x\\
=&{}-\frac12\int_{\G\times S^{d-1}}B_{ij}\big(\phi_{j}'+\phi_i'-\phi_{j}-\phi_i\big)\big(f_i'f_j'\tau_i\tau_j-f_if_j\tau_i'\tau_j'\big)\dd\omega\dd v_i\dd v_j\dd x\\
=&{}-\frac12\int_{\G\times S^{d-1}}\overline\nabla^{ij}\Phi B_{ij} \Lambda^{ij}(F)\overline\nabla^{ij}\dd\cH(F)\dd\omega\dd v_i\dd v_j\dd x.
\end{align*}

We define the momentum $M$ and energy $E$ for the system
\begin{align*}
   M(F)=\sum_{i=1}^N\int_{\Do}m_iv_if_i\dd x\dd v_i\quad\text{and}\quad E(F)=\sum_{i=1}^N\int_{\Do}\frac{m_i}{2}|v_i|^2f_i\dd x\dd v_i.
\end{align*}
We have the following the mass conservation law for single species
\begin{equation}
\label{mass-consv}
\int_{\Do}(f_i)_t\dd x\dd v_i =\int_{\Do}(f_i)_0\dd x\dd v_i\quad\forall t\in[0,T]\quad i=1,\dots,N.   
\end{equation}
By \eqref{B:cl}, the following momentum and energy conservation laws holds
\begin{equation}
    \label{B-me}
\begin{aligned}
M_t=M_0\quad\text{and}\quad E_t=E_0\quad\forall t\in[0,T].
\end{aligned}
\end{equation}

For the Boltzmann equation \eqref{IBE}, the entropy is non-increasing in time. The following entropy identity holds at least formally
\begin{equation}
\label{H-theorem}
\cH(F_t)-\cH(F_0)=-\int_0^t  \cD(F_s)\dd s \quad\forall t\in[0,T],
\end{equation}
where the entropy dissipation is given by
\begin{equation*}
  \DB(F)=\frac14\sum_{i,j=1}^N\int_{\G\times S^{d-1}} B_{ij}\Lambda^{ij} |\overline\nabla^{ij}\dd\cH(F)|^2 \dd\omega\dd v_i\dd v_j\dd x\ge 0.
\end{equation*}

We consider the space $\aZ$ to be the space that consists of all vector-valued Schwartz functions $F=(f_1,\dots,f_N)^T$ and $f_i\in\cP(\R^{2d})$ has probability density. The space $\aZ$ is endowed with the $L^2$-inner product. 
The multi-species Boltzmann equation \eqref{IBE} can be cast as GENERIC structure with the building block $\{\aL,\aM,\aaE,\aS\}$ with  
\begin{equation}
    \label{B-1:EL}
\begin{aligned}
&\aaE(F)=E(F)\quad\text{and}\quad \aS(F)=-\cH(F),
\end{aligned}
\end{equation}
and the operators $\aL$ and $\aM$ at $F\in\aZ$ are given by
\begin{equation}
    \label{B-1:ML}
\begin{aligned}
\aM(F)G&=-\frac{1}{4}\overline\nabla\cdot\Big(B\circ \Lambda(F)\circ\overline\nabla \xi\Big)\quad\text{and}\\
\aL(F)G&=-\nabla\cdot\Big(\frac{F}{m} \big(\aJ\nabla \xi\big)^T\Big),\quad \aJ=\begin{pmatrix}
    0& \mathsf{id}_d\\
    -\mathsf{id}_d&0
\end{pmatrix}
\end{aligned}
\end{equation}
for all smooth decaying functions $\xi=(\xi_1,\dots,\xi_N)^T$. In the above, we use the notation $\nabla =(\nabla_x,\nabla_v)^T$, and $\frac{F}{m}$ denotes the matrix
\begin{equation*}
\frac{F}{m}=\big({f_i}/{m_j}\big)_{i,j=1}^N\Id_N\in \R^{N\times N}. 
\end{equation*}
One can check straightforwardly that ${\aL,\aM,\aaE,\aS}$ form a GENERIC building block for \eqref{IBE}. The computations are very close to those in the single-species case; we refer the reader to \cite[Appendix]{EH25} for details.

The Boltzmann equation also admits non-quadratic, so-called $\cosh$-GENERIC building blocks, which are related to large-deviation structures, which can also be generalised to the multi-species case \eqref{IBE} analogously to \eqref{B-1:EL} and \eqref{B-1:ML}. We refer the reader to \cite{DGH25} for further discussion on the $\cosh$ entropy dissipation potentials. 

\subsection{GENERIC structure of the multi-species Landau equation}
In this section, we show the GENERIC structure of the multi-species Landau system \eqref{ILE}. Similarly as in the Boltzmann system, we need to extend the concept of discrete gradient and divergence operators from the single-species introduced in \cite{DuongHe2025} to the multi-species case.

For smooth decaying $\Phi=(\phi_1,\dots,\phi_N)^T$, $\phi_i=\phi_i(x,v_i)$, we define the Landau gradient
\begin{equation*}
    \widetilde\nabla  \Phi=\big(\widetilde\nabla^{ij}\Phi\big)_{i,j=1}^N\quad\text{and}\quad \widetilde\nabla^{ij}\Phi=\Pi_{ij}\Big(\frac{\nabla_{v_i} \phi_i}{m_i}-\frac{\nabla_{v_j}\phi_j}{m_j}\Big)\in \R^d. 
\end{equation*}
For any smooth decaying functions $\Phi$ and $\bG=\big(\bg_{ij}\big)_{i,j=1}^N$ with $\bg_{ij}=\bg_{ij}(x,v_i,v_j)$ and $\bg_{ij}\in\R^d$, we have the following integration by parts formula
\begin{align*}
\sum_{i,j=1}^N\int_{\G}\bg_{ij} \cdot \widetilde\nabla^{ij}\Phi\dd v_i\dd v_j\dd x=-\sum_{i=1}^N\int_{\Do}\phi_i\widetilde\nabla^i\cdot \bG   \dd v_i\dd x.\label{L-M:IP}
\end{align*}
 Then $\widetilde \nabla \cdot $ is given by
\begin{gather*}
\widetilde \nabla \cdot \bG=(\widetilde \nabla^1 \cdot \bG,\dots,\widetilde \nabla^N \cdot \bG)^T\in \R^N,\\
\widetilde \nabla^i\cdot \bG=\sum_{j=1}^N\frac{\nabla_{v_i}}{m_i}\cdot \int_{\R^d}\Pi_{ij}\big(\bg_{ij}-\bg_{ij}^*\big)\dd v_j.
\end{gather*}

We recall the definition of entropy \eqref{H}, and we have 
\begin{align*}
\widetilde\nabla^{ij}\dd\cH( F)&=\Pi_{ij}\Big(\frac{\nabla_{v_i} \dd\cH_i(f_i)}{m_i}-\frac{\nabla_{v_j} \dd\cH_j(f_j)}{m_j}\Big)\\
&=\Pi_{ij}\Big(\frac{\nabla_{v_i} f_i}{m_if_i\tau_i}-\frac{\nabla_{v_j} f_j}{m_jf_j\tau_j}\Big). 
\end{align*}

Let $M:=(m_1,\dots,m_N)\Id\in \R^{N\times N}$. Notice that $MA=\big(m_iA_{ij}\big)\in \R^{N\times N}$ is a symmetric matrix. Let $F\Tau:=\big(f_i\tau_i,\dots,f_N\tau_N\big)^T\in \R^N$.
By definition, the Landau collision operator can be written as
\begin{equation*}
 \label{div-Landau}   
\begin{aligned}
    \QL_{ij}(F,F)&=\frac{\nabla_{v_i}}{m_i}\cdot \int_{\R^d}m_iA^{ij}f_i\tau_if_{j}\tau_{j}\widetilde\nabla^{ij}\dd\cH(F)\dd v_j,\\
\QL(F,F)&=\frac12\widetilde\nabla\cdot\Big(M A\circ F\Tau\otimes F\Tau\circ \widetilde\nabla \dd\cH( F)\Big).
\end{aligned}
\end{equation*}

The following weak formulation holds for test functions $\Phi\in\R^N$
\begin{equation*}
    \label{weak-L}
    \begin{aligned}
     &\big\langle \QL(F,F),\Phi\big\rangle\\
     =&{}-\frac12\int_{\G}MA\circ F \Tau \otimes F\Tau \circ \widetilde\nabla \dd\cH( F):\widetilde\nabla\Phi\\
=&{}-\frac12\sum_{i,j=1}^N\int_{\G}m_iA_{ij}f_i\tau_if_j\tau_j \Pi_{ij}\Big(\frac{\nabla_{v_i} f_i}{m_if_i\tau_i}-\frac{\nabla_{v_j} f_j}{m_jf_j\tau_j}\Big)\cdot \Big(\frac{\nabla_{v_i} \phi_i}{m_i}-\frac{\nabla_{v_j} \phi_j}{m_j}\Big).
    \end{aligned}
\end{equation*}
Similar to the Boltzmann case \eqref{check}, one can check the above weak formulation by adding up the $\QL_{ij}$ and $\QL_{ji}$ terms.

Similar to the Boltzmann case, the mass, momentum and energy conservation laws \eqref{mass-consv} and \eqref{B-me} hold at least formally.
The entropy identity \eqref{H-theorem} holds with the Landau entropy dissipation given by
\begin{equation*}
  \DL(F)=\frac12\sum_{i,j=1}^N\int_{\G} m_i A_{ij}(F\Tau\otimes F\Tau)_{ij} \Big|\widetilde\nabla^{ij} \dd\cH(F)\Big|^2 \dd \eta\ge 0.
\end{equation*}

The multi-species Landau equation \eqref{ILE} has a GENERIC building block $\{\aL,\aM,\aaE,\aS\}$ with $\aL,\aaE,\aS$ are given by \eqref{B-1:EL} and \eqref{B-1:ML}, and the operator $\aM$ is replaced by 
\begin{equation*}
    \label{L-1:ML}
\begin{aligned}
\aM^{\aL}(F)\bG=-\frac{1}{2}\widetilde\nabla\cdot\Big(MA\circ F\Tau\otimes F\Tau \circ\widetilde\nabla \bG\Big).
\end{aligned}
\end{equation*}

\begin{remark}[Linearised equations]
We consider the following multi-species linearised Boltzmann and Landau equation
\begin{equation}
    \label{eq:linear}
\begin{gathered}
\d_t F+v\cdot\nabla_x F= Q_{\sf linear}(F)=Q(F,\mathsf{1})+Q(\mathsf{1},F)\\
 Q_{\sf linear}^{\aB}(F)=\frac{1}{4}\overline\nabla\cdot\big(B\circ\overline\nabla F\big),\quad Q_{\sf linear}^{\aL}(F)=\frac12\widetilde\nabla\cdot\Big(M A\circ  \widetilde\nabla F\Big),
\end{gathered}
\end{equation}
where $\mathsf{1}:=(1,\dots,1)^T\in \R^N$. The mass, momentum and energy conservation laws \eqref{mass-consv} and \eqref{B-me} hold at least formally.
We define the linear entropy
\begin{equation*}
\HL(F)=\frac12\sum_{i=1}^N\int_{\Do}f_i^2\dd v_i\dd x.
\end{equation*}
The following entropy identity holds at least formally
\begin{gather*}
\label{L-H}
\HL(F_t)-\HL(F_0)=-\int_0^t  \cD(F_s)\dd s \quad\forall t\in[0,T],
\end{gather*}
where the entropy dissipation in Boltzmann and Landau cases are given by
\begin{gather*}
\cD^{\aB}_{\sf linear}(F)=\frac14\sum_{i,j=1}^N\int_{\G\times S^{d-1}} B_{ij}|\overline\nabla^{ij} F|^2,\quad \cD^{\aL}_{\sf linear}(F)=\frac12\sum_{i,j=1}^N\int_{\G} m_i A_{ij} \Big|\widetilde\nabla^{ij}  F \Big|^2.
\end{gather*}

The linearised multi-species systems \eqref{eq:linear} have the GENERIC building blocks $\{\aL,\aM,\aaE,\aS\}$, where $\aaE$ and $\aL$ are given by \eqref{B-1:EL} and \eqref{B-1:ML}, the entropy $\aS(F)=-\HL(F)$, and $\aM$ in the Boltzmann and Landau cases are given by
\begin{equation*}
    \label{B-2:EL}
\begin{aligned}
\aM^{\aB}_{\sf linear}(F)G&=-\frac{1}{4 }\overline\nabla\cdot\Big(B\circ  \overline\nabla G\Big),\quad \aM^{\aL}_{\sf linear}(F)\bG=-\frac{1}{2}\widetilde\nabla\cdot\Big(MA\circ \widetilde\nabla \bG\Big).
\end{aligned}
\end{equation*}    
\end{remark}

\section{Fisher information of the spatially homogeneous multi-species Boltzmann equation}
\label{sec:Fisher}

The multi-species Boltzmann and Landau equations with vanishing spatial dependence on $x\in\R^d$ reduce to homogeneous models associated to gradient flow structures.
In this section, we generalise the Fisher information estimates for the single-species homogeneous Boltzmann equation in \cite{ISV26} to the multi-species cases \eqref{IBE} with taking $\tau_i(f_i)=1$ for all $i=1,\dots,N$
\begin{equation}
\label{B-homo}
\begin{aligned}
&\d_t f_i=\sum_{j=1}^NQ_{ij}(F,F),\quad f_i=f_i(t,v_i),\\
&Q_{ij}(F,F)=\int_{\R^d\times S^{d-1}}B_{ij}
\big(f_i'f_{j}'-f_if_{j}\big)\dd \omega \dd v_j .
\end{aligned}
\end{equation}
We recall the collision kernel $B_{ij}$ given in \eqref{B}
\begin{align}
\label{B-ij}
    B_{ij}(|v_i-v_j|,\omega)=\alpha_{ij}\big(|v_i-v_j|\big)b_{ij}(\theta_{ij}),\quad \theta_{ij}=\arccos\frac{(v_i-v_j\cdot \omega)}{|v_i-v_j|}.
\end{align}
In the following, we do not distinguish the notations $b(\theta)$ and $b(\cos \theta)$.
We recall the definition of the Fisher information for the spatially inhomogeneous system \eqref{B-homo} given in \eqref{Fisher-cI-F}
\begin{equation*}
\mathcal{I}(F)\defeq\sum_{i=1}^NI(f_i),\quad I(f)\defeq\int_{\R^d}|\nabla \log f|^2 f\dd v.
\end{equation*}
For a function $f:S^{d-1}\to \R_+$ such that $f(\omega)=f(-\omega)$, we define the following integro-differntial operator $\cB$
\begin{equation}
\label{def:cB}
    \cB f(\omega)=\int_{S^{d-1}}\big(f(\omega')-f(\omega)\big) b(\omega\cdot \omega')\dd\omega'.
\end{equation}
Let $\Gamma_\Delta$ denote the carr\'e du champ operator associated to Laplace-Beltrami operator on sphere $S^{d-1}$ given by 
\begin{align*}
\Gamma_\Delta(f,g):= \frac12\big(\Delta(fg)-\Delta f g- f\Delta g\big).    
\end{align*}
The carr\'e du champ operator associated to $\Delta $ and $\cB$  is defined as \begin{align*}
\Gamma^2_{\cB,\Delta}(f,g):=\frac12\big(\cB\Gamma_\Delta(f,g)-\Gamma_\Delta(\cB f,g)-\Gamma_\Delta(f,\cB g)\big).   
\end{align*} 

The following integro-differential functional inequality was proved in \cite{ISV26}
\begin{equation}
\label{log-sobolev}
    \int_{S^{d-1}}\Gamma^2_{\cB,\Delta}(\log f,\log f)f\dd\omega\ge \Lambda_b\int_{S^{d-1}\times S^{d-1}}\frac{|f(\omega)-f(\omega')|^2}{f(\omega)+f(\omega')}b_{ij}(\omega\cdot \omega')\dd \omega'\dd\omega,
\end{equation}
where the constant  $\Lambda_b>0$ denotes the optimal constant and  depends on the angle function $b$. 
The inequality \eqref{log-sobolev} is proved in \cite{ISV26} for the even case, that is, when $f(\omega)=f(-\omega)$. The proof relies on classical log-Sobolev and Poincaré inequalities on sphere to obtain an integro-differential formulation, where the evenness assumption only plays a role in optimal constant. Notice that in the multispecies setting, the evenness assumption only refer to the  interaction of the particles in the same species ($i=j$). 
We refer the reader to \cite{ISV26, Vil25} for a detailed discussion of the optimal constant in the even case. 

The following theorem on the decay of the Fisher information for the multi-species system \eqref{B-homo} is the main result of this section.
\begin{theorem}\label{thm:Fisher}
Let $\alpha_{ij}$ and $b_{ij}:\R\to\R_+$ satisfy $\alpha_{ij}=\alpha_{ji}$ and $b_{ij}=b_{ji}$ for all $i,j=1,\dots,N$, and assume that
\begin{equation}
\label{ass}
r\frac{|(\alpha_{ij})'(r)|}{\alpha_{ij}(r)}\le 2\sqrt{\Lambda_{ b_{ij}}}\quad \forall\, r>0.    
\end{equation}
Then the Fisher information of the multi-species Boltzmann equation \eqref{B-homo}–\eqref{B-ij} is non-increasing in time. 
\end{theorem}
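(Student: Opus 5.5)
We follow the strategy of \cite{ISV26} and reduce the multi-species estimate to a family of two-particle estimates, one for each pair $(i,j)$. The starting point is the lifting principle. For a pair of species $(i,j)$ we consider the tensorised density $f_i\otimes f_j(v_i,v_j)=f_i(v_i)f_j(v_j)$ on $\R^{2d}$ and its two-particle collision operator
\begin{equation*}
\mathcal Q_{ij}(f_i\otimes f_j)(v_i,v_j)=\int_{S^{d-1}}B_{ij}\big(f_i'f_j'-f_if_j\big)\dd\omega .
\end{equation*}
Integrating in $v_j$ recovers $Q_{ij}(F,F)$, and integrating in $v_i$ recovers $Q_{ji}(F,F)$. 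We work with the mass-weighted Fisher information on $\R^{2d}$,
\begin{equation*}
I_{m}(g)=\int|\nabla_{v_i}\log g|^2g+\int|\nabla_{v_j}\log g|^2g .
\end{equation*}
This is additive on tensor products: $I_m(f_i\otimes f_j)=I(f_i)+I(f_j)$. For the diagonal terms $i=j$ one uses instead the single-species lifting of \cite{ISV26}, in which $I(f_i\otimes f_i)=2I(f_i)$. The derivative of a convex functional can only increase after marginalisation, i.e.\ $\langle I'(f),\,\text{marginal of }\mathcal Q\rangle\le\langle I_m'(f_i\otimes f_j),\mathcal Q_{ij}\rangle$. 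Combining this with the sums over $i,j$ and the symmetry $B_{ij}=B_{ji}$ gives
\begin{equation*}
\frac{\dd}{\dd t}\mathcal I(F)\le \frac12\sum_{i,j}\big\langle I_m'(f_i\otimes f_j),\mathcal Q_{ij}(f_i\otimes f_j)\big\rangle .
\end{equation*}
It therefore suffices to show that each two-particle term is $\le0$ for an arbitrary positive density $g$ on $\R^{2d}$.

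For the two-particle problem we change to centre-of-mass variables. Set $M=m_i+m_j$, $V=(m_iv_i+m_jv_j)/M$, $u=v_i-v_j=r\sigma$ with $r=|u|$ and $\sigma\in S^{d-1}$. By \eqref{post-p}, a collision fixes $V$ and $r$ and only rotates $\sigma$ to $\omega$. Hence $\mathcal Q_{ij}$ acts as $\alpha_{ij}(r)\,\cB_{b_{ij}}$ on the sphere variable with $(V,r)$ frozen, where $\cB$ is the operator \eqref{def:cB}. It is crucial that we use the mass-weighted metric: we need the kinetic form $m_i|v_i|^2+m_j|v_j|^2=M|V|^2+\mu|u|^2$, with reduced mass $\mu=m_im_j/M$, to be orthogonal in these coordinates. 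This is why $I_m$ is chosen with weights compatible with that quadratic form, possibly rescaling by $m_i,m_j$. We then decompose $|\nabla\log g|^2$ into the $V$-gradient, the radial derivative $\partial_r$, and the angular gradient $r^{-2}|\nabla_\sigma|^2$. The $V$-part commutes with $\mathcal Q_{ij}$ and gives a nonpositive contribution by convexity of $(g,\nabla g)\mapsto|\nabla g|^2/g$. The radial part produces cross terms involving $\alpha'_{ij}(r)$. The angular part produces $-\int\alpha_{ij}r^{-2}\Gamma^2_{\cB,\Delta}(\log g,\log g)g$.

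The main obstacle is controlling the radial cross term, which arises from differentiating $\alpha_{ij}(r)$. As in \cite{ISV26}, we bound it by Cauchy--Schwarz by
\begin{equation*}
\frac{r|\alpha_{ij}'|}{\alpha_{ij}}\Big(\int\alpha_{ij}|\partial_r\log g|^2g\Big)^{1/2}\Big(\int\alpha_{ij}r^{-2}\tfrac{|g-g'|^2}{g+g'}b_{ij}\Big)^{1/2},
\end{equation*}
and absorb it into the sum of the good terms, namely the nonpositive radial convexity term and the angular term. The angular term is bounded below by the inequality \eqref{log-sobolev} with constant $\Lambda_{b_{ij}}$. The elementary inequality $2ab\le\Lambda a^2+b^2/\Lambda$, with $a,b$ the two square roots above, together with the hypothesis \eqref{ass}, $r|\alpha_{ij}'|/\alpha_{ij}\le2\sqrt{\Lambda_{b_{ij}}}$, then yields nonpositivity. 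Two points remain to be checked carefully. First, the lack of evenness in $\sigma$ for $i\ne j$ must only affect the constant $\Lambda_{b_{ij}}$, as remarked after \eqref{log-sobolev}. Second, the mass weights must not spoil the exact orthogonal splitting. Summing over $(i,j)$ then gives $\frac{\dd}{\dd t}\mathcal I(F)\le0$.
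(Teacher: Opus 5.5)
\textbf{Overall route.} Your plan follows the paper's route. You reduce to pairwise lifted problems on $\R^{2d}$; the paper detours through the $\R^{2dN}$ lift of \cite{JWY25} but ends at the same pairwise identity \eqref{goal-0}. Your direct tensor-product reduction is fine, and your ``inequality after marginalisation'' is actually an equality at tensor products, because $I'$ of a tensor product splits additively. The remaining steps also match the paper: centre-of-mass variables, the $V$/radial/angular splitting, the lemmas of \cite{ISV26}, and \eqref{log-sobolev} combined with \eqref{ass}.

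\textbf{The gap: the mass weights.} You defer the requirement that ``the mass weights must not spoil the exact orthogonal splitting.'' This is not a routine check: for the functional $I_m$ you defined, the splitting fails whenever $m_i\neq m_j$. With $\zeta=m_i/(m_i+m_j)$, $v_i=V+(1-\zeta)u$ and $v_j=V-\zeta u$,
\[
|\nabla_{v_i}h|^2+|\nabla_{v_j}h|^2=\big(\zeta^2+(1-\zeta)^2\big)|\nabla_Vh|^2+2|\nabla_uh|^2+\frac{2(m_i-m_j)}{m_i+m_j}\,\nabla_Vh\cdot\nabla_uh .
\]
The mixed term is not invariant under $(\nabla_V,\nabla_u)\mapsto(\nabla_V,R\nabla_u)$, $R\in SO(d)$, which is the symmetry the collision operator respects. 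So the $V$-part no longer decouples from $\mathcal Q_{ij}$, and \cite[Lemmas 3.1--3.3]{ISV26} do not apply. The mixed term vanishes only for gradient weights proportional to $(1/m_i,1/m_j)$, the dual of the kinetic form. Weights $(m_i,m_j)$ would still leave a term proportional to $m_i^2-m_j^2$. The paper uses the $1/m_i$ weights in $\widetilde I$ and $\widetilde I_{ij}$, which is why its splitting has coefficients $\frac{1}{m_i+m_j}$ and $\frac1{m_i}+\frac1{m_j}$.

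\textbf{Consequence.} The price is that the tensor identity you rely on changes. Now $\widetilde I_{ij}(f_i\otimes f_j)=m_i^{-1}I(f_i)+m_j^{-1}I(f_j)$, so the pairwise derivative is $m_i^{-1}\langle I'(f_i),Q_{ij}(F,F)\rangle+m_j^{-1}\langle I'(f_j),Q_{ji}(F,F)\rangle$. Summing over $(i,j)$ shows that $\sum_i m_i^{-1}I(f_i)$ is non-increasing, not $\mathcal I$.

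Rescaling pair by pair cannot fix this. Suppose pair $(i,j)$ is weighted by $\lambda_{ij}=\lambda_{ji}$ and the sum is required to equal $\sum_ic_i\langle I'(f_i),\sum_jQ_{ij}\rangle$. Then $c_i=\lambda_{ij}/m_i$ for every $j$, so $c_im_i$ must be constant. Hence your two requirements, additivity to $I(f_i)+I(f_j)$ and an orthogonal splitting, are incompatible unless all masses coincide. The argument proves the statement for the mass-weighted Fisher information, equivalently for $\mathcal I$ written in the variables $\sqrt{m_i}\,v_i$. The paper's \eqref{goal-0} is written without these factors, so it passes over the same issue. Its choice of $\widetilde I$ shows that the weighted functional is what its proof actually controls.

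\textbf{Smaller point: the radial step.} In your Cauchy--Schwarz bound, the factor $\int\alpha_{ij}|\partial_r\log g|^2g$ cannot be absorbed, because no term of that form appears with a negative sign. The cross term is $\alpha_{ij}'\iint b_{ij}(\partial_r\log g-\partial_r\log g')(g'-g)$. It must be paired with the radial dissipation $-\frac{\alpha_{ij}}2\iint b_{ij}|\partial_r\log g'-\partial_r\log g|^2(g+g')$. Young's inequality then leaves $\frac{|\alpha_{ij}'|^2}{2\alpha_{ij}}\iint b_{ij}\frac{|g-g'|^2}{g+g'}$, which is the first term of \eqref{goal-3}. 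The angular term dominates this by \eqref{log-sobolev} under \eqref{ass}.
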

\begin{proof}

Following the multi-species Landau case in \cite{JWY25}, we will first introduce a lifted system using the technique of doubling the variables, and then apply the Fisher information estimates from \cite{ISV26} to handle the two-species interactions. 

For a function $\widetilde F:\R^{2dN}\to\R_+$, $\widetilde{F}=
\widetilde{F}(\overline{v},\overline{v}_*)$ where $\overline v=(v_{1},\dots  v_{N})$ and $ \overline v_*=(v_{1*},\dots, v_{N*})\in \R^{dN}$, we define the Fisher information
\begin{align*}
    \widetilde I(\widetilde F)=\sum_{i=1}^{N}\frac{1}{m_i}\int_{\R^{2Nd}}|\nabla_{i}\log \widetilde F|^2\widetilde F \dd \overline v\dd \overline v_*,\quad \nabla_i=(\nabla_{v_i},\nabla_{v_{i*}})^T.
\end{align*}
We also define the $i$-marginals as follows
\begin{equation*}
    \pi_i\widetilde F=\int_{\R^{(2n-1)d}}\widetilde F\dd\hat v_i\dd v_*,
\end{equation*}
 where $\hat v_i=(v_1,\dots, v_{i-1},v_{i+1},\dots, v_{N})\in \R^{d(N-1)}$.  One can define the $i_*$-marginals analogously. Note that $\pi_i \widetilde F = \pi_{i_*} \widetilde F$ if $\widetilde F$ is symmetric in $v_i$ and $v_{i_*}$.

Under the symmetry assumption, similarly to the single-species case, the Fisher information is controlled by the sum of the Fisher informations of the marginals; see, for example, \cite[Lemma 5.1]{JWY25}
\begin{align*}
    \tilde I(\tilde F)\ge 2\sum_{i=1}^N\frac{1}{m_i}I(\pi_i\widetilde F).
\end{align*}

We define the spherical integro-differential operator $\widetilde
Q_{ij_*}(\widetilde F,\widetilde F)$  as follows 
\begin{align*}
\widetilde
Q_{ij_*}(\widetilde F,\widetilde F)&= \int_{S^{d-1}}B_{ij}(|v_i-v_{j_*}|,\omega')\big(\widetilde F_{ij_*}'-\widetilde F\big)\dd \omega', 
\end{align*}
where we define $\widetilde F'_{ij_*}$ as replacing $v_i$ and $v_{j_*}$ by post-collision velocities $v_i'$ and $v_{j_*}'$ given by \eqref{post-p} associated to $\omega'\in S^{d-1}$ as follows
\begin{gather*}
\widetilde F^{ij_*}:= \widetilde F(v_1,\dots, v_i',\dots, v_{j*}',\dots v_{N*}),\\
v_i'=\frac{m_iv_i+m_{j*}v_j+m_j|v_i-v_{j*}|\omega'}{m_i+m_j},\quad v_{j*}'=\frac{m_iv_i+m_jv_{j*}-m_i|v_i-v_{j*}|\omega'}{m_i+m_j}.
\end{gather*}

In the following, we write $f_{i*}=f_i(v_{i*})$. Without loss of generality, we take  $f_i$ and $f_{i*}\in\cP(\Do)$ for all $i=1,\dots,N$. 
We take $\widetilde F$ as the following form of tensorised function
\begin{align*}
      \widetilde F=\Pi_{i=1,\dots,N}^{\otimes}f_i\otimes f_{i*}.
\end{align*}
One can check straightforwardly that, for all $i,j=1,\dots,N$
\begin{align}
\label{goal-0}
\big\langle \widetilde I_{ij}'(f_i\otimes f_{j_*}),\widetilde
Q_{ij_*}(f_i\otimes f_{j_*})\big\rangle= \langle  I'(f_i),  Q_{ij}(F,F)\rangle+\langle I'(f_j), Q_{ji}(F,F)\rangle,
\end{align}
where $\langle I'(f),\xi\rangle$  denote the Gateaux derivative of $I$ in the direction of $\xi$ (similar definition for $\tilde{I}'$). 
We define the following integro-differential operator and Fisher information for tensor functions $f_i\otimes f_{j*}$
\begin{align*}
 &\widetilde
Q_{ij_*}(f_i\otimes f_{j*})=\int_{S^{d-1}}  B_{ij}\big((f_i'\otimes f_{j*}')-(f_i\otimes f_{j*})\big)\dd\omega',\\
 &\widetilde I_{ij}(f_i\otimes f_{j*})=\int_{\R^{2d}}|\bbnabla\log (f_i\otimes f_{j*})|^2(f_i\otimes f_{j*})\dd v_i\dd v_{j*},
\end{align*}
where ${\bbnabla}:=\big({\nabla_{v_i}}/{\sqrt{m_i}},{\nabla_{v_{j*}}}/{\sqrt{m_j}}\big)$. 
On the one hand, by using \cite[Lemma 5.2]{JWY25}, we have 
\begin{align}
\label{goal-1}
    \big\langle \widetilde I'(\widetilde F),\widetilde
Q_{ij_*}(\widetilde F,\widetilde F)\big\rangle
    =\big\langle \widetilde I_{ij}'(f_i\otimes f_{j_*}),\widetilde
Q_{ij_*}(f_i\otimes f_{j_*})\big\rangle,
\end{align}
where the proof does not involve the precise form of operator $Q$. 
On the other hand, the symmetrical of $\alpha$ and $b$ ensures that the right-hand-sides of \eqref{goal-0} and \eqref{goal-1} are identical.
Hence, to show Theorem \ref{thm:Fisher}, we only need to show
\begin{align}
\label{goal-2}
    \big\langle \widetilde I_{ij}'(f_i\otimes f_{j_*}),\widetilde Q_{ij_*}(f_i\otimes f_{j_*})\big\rangle\le 0,\quad \forall \,i,j=1,\dots,N.
\end{align}
To this end, we fix a pair of $i,j_*$.
We use the centre of mass coordinate to define
\begin{align*}
z=\zeta v_i+(1-\zeta )v_{j*},\quad \zeta=\frac{m_i}{m_i+m_j}.
\end{align*}
We write $v_i-v_{j*}=r\omega$ in the spherical coordinate with $r\in \R_+$ and $\omega\in S^{d-1}$. Notice that the pre- and post collision velocities can be written as 
\begin{align*}
v_i=z+(1-\zeta)r\omega,\quad     v_{j_*}=z-\zeta r\omega,\quad v_i'=z+(1-\zeta)r\omega',\quad     v_{j_*}=z-\zeta r\omega'. 
\end{align*}
We define
\begin{align*}
    g=g(z,r,\omega)=f_i\otimes f_{j*}\quad\text{and}\quad g'=g(z,r,\omega')=f_i'\otimes f_{j*}'.
\end{align*}
Notice that $g(\omega)=g(-\omega)$ only in the case of $i=j$.

Let $\cB_{ij}$ be defined as in \eqref{def:cB} associated to the angle function $b_{ij}$. Notice that
\begin{align*}
\widetilde Q_{ij}(g,g)=\alpha_{ij}(r)\int_{S^{d-1}}b_{ij} (\omega\cdot\omega')(g'-g)\dd\omega'=\alpha_{ij}(r)\cB_{ij} g.
\end{align*}

By using spherical coordinate, we have
\begin{align*}
    \nabla_{v_i}=\zeta \nabla_z+\omega\nabla_r+r^{-1}\nabla_\omega\quad \text{and}\quad  \nabla_{v_{j_*}}=(1-\zeta) \nabla_z-\omega\nabla_r-r^{-1}\nabla_\omega.
\end{align*}
Then the Fisher information $\widetilde I_{ij}(g)$ can be written as the following sum of parallel, radius and angular Fisher information
\begin{align*}
&\widetilde I_{ij}(g)=\widetilde I_{z}(g)+\widetilde I_{r}(g)+\widetilde I_{\omega}(g)\\
&:=\frac{1}{m_i+m_j}\int_{\R^d\times \R_+\times S^{d-1}}{|\nabla_z \log g|^2}{g}r^{d-1}\dd\omega\dd r\dd z\\
&\quad +\Big(\frac{1}{m_i}+\frac{1}{m_j}\Big)\int_{\R^d\times \R_+\times S^{d-1}}{|\nabla_r  \log  g|^2}{g}r^{d-1}\dd\omega\dd r\dd z\\
&\quad+\Big(\frac{1}{m_i}+\frac{1}{m_j}\Big)\int_{\R^d\times \R_+\times S^{d-1}}{|\nabla_\omega   \log  g|^2}gr^{d-3}\dd\omega\dd r\dd z.
\end{align*}
Then to show \eqref{goal-2}, we only need to show 
\begin{align*}
   \big\langle \widetilde I_{z}'(g),\widetilde Q_{ij}(g,g)\big\rangle+\big\langle \widetilde I_{r}'(g),\widetilde Q_{ij}(g,g)\big\rangle+\big\langle \widetilde I_{\omega}'(g),\widetilde Q_{ij}(g,g)\big\rangle\le 0.
\end{align*}
By applying \cite[Lemma 3.1-3.3 \& 5.1]{ISV26}, we have 
 \begin{align*}
     &\langle \widetilde I_{z}'(g),\widetilde Q_{ij}(g,g)\rangle\\
     =&{}-\frac{1}{2(m_i+m_j)}\int|\nabla_z\log g'-\nabla_z \log g|^2(g+g')b_{ij}(\omega\cdot\omega')\alpha_{ij}(r)r^{d-1}\le0,
    \end{align*}
 and
  \begin{equation}
      \label{goal-3}
   \begin{aligned}
     &\langle \widetilde I_{r}'(g),\widetilde Q_{ij}(g)\rangle+\langle \widetilde I_{\omega}'(g),\widetilde Q_{ij}(g)\rangle\\
     \le&{}\frac12\Big(\frac{1}{m_i}+\frac{1}{m_j}\Big)\int_{\R^d\times\R_+\times(S^{d-1})^2}\frac{|g'-g|^2}{g'+g}b_{ij}(\omega\cdot\omega')\frac{|\alpha_{ij}'|^2}{\alpha_{ij}}r^{d-1}\\
     &-4\alpha_{ij}r^{d-3}g\Gamma^2_{\cB_{ij},\Delta}\big(\log g(z,r,\omega'),\log g(z,r,\omega')\big)\dd\omega'\dd\omega\dd r\dd z.
    \end{aligned}
    \end{equation}
   By 
log-Sobolev inequality \eqref{log-sobolev}, the inequality \eqref{goal-3} holds negatively if $\alpha_{ij}$ and $b_{ij}$ satisfy the assumption \eqref{ass}, which completes the proof of this theorem. 

\end{proof}

\appendix
\section{Grazing limit}
\label{sec:GL}
In this Appendix, we show that the grazing limit of the multispecies Boltzmann equation \eqref{IBE}-\eqref{B} leads to the Landau equation \eqref{ILE}-\eqref{A}. The proof follows the classical results in \cite{villani1998new}. For the reason of completeness, we sketch the proof here.

We recall the definition deviation angle \eqref{B} 
\begin{equation}
\label{angle}
    \theta_{ij}=\arccos\frac{v_i-v_j}{|v_i-v_j|}\cdot \omega.
\end{equation}
We recall the Boltzmann collision kernel \eqref{B}
\begin{align*}
B=(B_{ij})_{ij=1}^N,\quad B_{ij}=\alpha_{ij}(|v_i-v_j|)b_{ij}(\theta_{ij}),
\end{align*}
where $\alpha_{ij},\,b_{ij}\ge 0$ and $\alpha_{ij}=\alpha_{ji}$, $b_{ij}b_{ji}$. Although $f_i'f_j'$ ($i\neq j$) is not symmetric under the transformation $\omega\mapsto -\omega$, we can still restrict the support of $\theta_{ij}$ to $[0,\pi/2]$ by symmetrisation. Indeed, define
 $\theta_{ij}(\theta)=\frac12\big(\theta_{ij}(\theta)+\theta_{ij}(-\theta)\big)$. Since we assume $b_{ij}=b_{ji}$ and combine the 
$ij$- and $ji$-terms in the weak formulation \eqref{weak-B}, this modification does not change the desired form \eqref{sum}. Consequently, we assume  that $\supp(b_{ij})\subset[0,\pi/2]$. 
We define the angle function
\begin{align*}
\beta_{ij}(\theta)=\sin\theta^{d-2}b_{ij}(\theta),\quad \supp(\beta_{ij})\subset[0,\pi/2].
\end{align*}
Moreover, we assume that $\beta_{ij}(\theta)\gtrsim \theta^{-\nu}$ for some  $\nu\in(0,2)$ and the following angular momentum is bounded 
\begin{equation}
\label{mome-A}
 \int_0^{\pi/2}\theta^2\beta_{ij}(\theta)\dd\theta=\frac{2(d-1)}{|S^{d-2}|}\frac{(m_i+m_j)^2}{m_i^2m_j^2}.
\end{equation}
For $d=2$, we take $|S^0|=2$. The constant of angular momentum is chosen for normalisation reason.

For $\varepsilon\in(0,1)$, we take a sequence of scaling
\begin{equation*}
\beta_{ij}^\varepsilon(\theta)=\pi^3/\varepsilon^3\beta_{ij} (\pi\theta/\varepsilon).
\end{equation*}
Notice that the angular momentum assumption \eqref{mome-A} holds for $\beta_{ij}^\varepsilon(\theta)$.
Correspondingly, we define the scaling kernels
\begin{equation}
\label{def:B-varepsilon}
b_{ij}^\varepsilon(\theta)=\sin\theta^{-(d-2)}\beta_{ij}^\varepsilon(\theta)\quad\text{and}\quad B_{ij}^\varepsilon= \alpha_{ij}(|v_i-v_j|)b_{ij}^\varepsilon(\theta_{ij}).
\end{equation}
Let $k_{ij}:=\frac{v_i-v_j}{|v_i-v_j|}$ and $S^{d-2}_{k_{ij}^\perp}:=\{\gamma\in S^{d-1}\mid \gamma\cdot k_{ij}=0\}$. Notice that $\int_{S^{d-1}}b_{ij}(\omega\cdot k_{ij})\dd\omega=\int_0^{\varepsilon/2}\int_{S^{d-2}_{k_{ij}^\perp}}\beta_{ij}(\theta_{ij})\dd\gamma\dd \theta_{ij}$. 

Following the grazing collision limit for Boltzmann equations in \cite{villani1998new,DH25C}, we have the following lemma.
\begin{lemma}
\label{lem:grazing}
  Let $\varepsilon\in(0,1)$.  Let $\tau_i=\tau_i(f)=1+\alpha_i f$ and $\alpha_i\in\{-1,0,1\}$.  We have  
\begin{align*}
&\lim_{\theta_{ij}\to0}\theta_{ij}^{-2}\int_{S^{d-2}_{k_{ij}^\perp}}\tau_i'\tau_j'\overline\nabla^{ij} \Phi =\frac{|S^{d-2}|(m_im_j)^2}{2(d-1)(m_i+m_j)^2}\times\\
&\Big(2|v_i-v_j|^2\big(m_i^{-1}\nabla_{v_i}-m_j^{-1}\nabla_{v_j}\big)\tau_i\tau_j\cdot \Pi_{ij}\big(m_i^{-1}\nabla_{v_i}\phi_i-m_j^{-1}\nabla_{v_j}\phi_j\big)\\
&\quad+\tau_i\tau_j\big(m_i^{-1}\nabla_{v_i}-m_j^{-1}\nabla_{v_j}\big)\cdot\big(|v_i-v_j|^2\Pi_{_{ij}}\big(m_i^{-1}\nabla_{v_i}\phi_i-m_j^{-1}\nabla_{v_j}\phi_j\big)\big)\Big).
\end{align*}
\end{lemma}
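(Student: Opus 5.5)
We will follow the classical grazing computation of \cite{villani1998new} and carry it out for a single pair $(i,j)$ with masses $m_i,m_j$. The starting point is a small-angle parametrisation of the post-collisional velocities. Write $k=k_{ij}=(v_i-v_j)/|v_i-v_j|$ and $\omega=\cos\theta\,k+\sin\theta\,\gamma$ with $\gamma\in S^{d-2}_{k^\perp}$. Then \eqref{post-p} gives
\[
v_i'-v_i=\frac{m_j|v_i-v_j|}{m_i+m_j}(\omega-k),\qquad v_j'-v_j=-\frac{m_i|v_i-v_j|}{m_i+m_j}(\omega-k),
\]
where $\omega-k=\theta\gamma-\tfrac{\theta^2}{2}k+O(\theta^3)$.

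Next we Taylor expand to second order in $\theta$ both $\phi_i'+\phi_j'-\phi_i-\phi_j$ and the product $\tau_i'\tau_j'=\tau_i\tau_j+(\text{first order})+\dots$. Since the velocity shifts are proportional to $m_j/(m_i+m_j)$ and $-m_i/(m_i+m_j)$, the first-order terms combine into the relative gradient:
\[
\frac{m_im_j|v_i-v_j|}{m_i+m_j}\,\theta\,\gamma\cdot\big(m_i^{-1}\nabla_{v_i}\phi_i-m_j^{-1}\nabla_{v_j}\phi_j\big).
\]
This is the origin of the operator $D:=m_i^{-1}\nabla_{v_i}-m_j^{-1}\nabla_{v_j}$. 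We then average over $\gamma\in S^{d-2}_{k^\perp}$, using
\[
\int\gamma\,\dd\gamma=0,\qquad \int\gamma\otimes\gamma\,\dd\gamma=\frac{|S^{d-2}|}{d-1}\,\Pi_{ij}.
\]
The $O(\theta)$ terms vanish, and the $O(\theta^2)$ terms split into three pieces:
\begin{enumerate}[(a)]
\item the cross term (first order of $\tau_i'\tau_j'$) $\times$ (first order of $\overline\nabla^{ij}\Phi$), which gives $\frac{|S^{d-2}|}{d-1}\big(\frac{m_im_j}{m_i+m_j}\big)^2|v_i-v_j|^2\,D(\tau_i\tau_j)\cdot\Pi_{ij}D\Phi$;
\item $\tau_i\tau_j$ times the Hessian term, $\tfrac12\gamma^TD^2\Phi\,\gamma$ in relative coordinates, averaged to a trace against $\Pi_{ij}$;
\item $\tau_i\tau_j$ times the $-\tfrac{\theta^2}{2}k$ drift term, $-\tfrac{\theta^2}{2}\,\frac{m_im_j|v_i-v_j|}{m_i+m_j}\,k\cdot D\Phi$.
\end{enumerate}

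The key step is to show that (b)+(c) equals $\tfrac12\frac{|S^{d-2}|}{d-1}\big(\frac{m_im_j}{m_i+m_j}\big)^2\,\tau_i\tau_j\,D\cdot\big(|v_i-v_j|^2\Pi_{ij}D\Phi\big)$. For this we compute $D\cdot(|v_i-v_j|^2\Pi_{ij})$ using that $D$ acting on functions of $v_i-v_j$ equals $(m_i^{-1}+m_j^{-1})\nabla_u$ with $u=v_i-v_j$. The identity
\[
\nabla_u\cdot\big(|u|^2\Pi(u)\big)=-(d-1)\,u
\]
then reproduces the drift (c) once the mass factors $(m_i^{-1}+m_j^{-1})\cdot\frac{m_im_j}{m_i+m_j}=1$ are matched. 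The Hessian part matches $|v_i-v_j|^2\Pi_{ij}:D^2\Phi$ directly.

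Collecting the terms with the prefactor $\frac{|S^{d-2}|(m_im_j)^2}{2(d-1)(m_i+m_j)^2}$ yields the stated formula: the factor $2$ on the first term comes from the cross term appearing once, against the $\tfrac12$ of the second-order Taylor terms. Dividing by $\theta^2$ and letting $\theta\to0$, with smoothness of $F,\Phi$ giving uniform $O(\theta^3)$ remainders, completes the argument.

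The main obstacle is the bookkeeping of mass factors in the second-order expansion. In particular, the Hessian mixes $\nabla_{v_i}^2$, $\nabla_{v_j}^2$ and $\nabla_{v_i}\nabla_{v_j}$ with weights $m_j^2$, $m_i^2$ and $-m_im_j$, and we need these to assemble exactly into $D\otimes D$ applied to $\Phi$ with the correct prefactor.
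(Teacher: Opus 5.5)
Your proposal is correct and is essentially the paper's argument: your pieces (a), (b), (c) are exactly the paper's cross term $\Theta_1\Theta_3$, the Hessian term $\Theta_2$, and the $N_k$ part (the $(\cos\theta-1)k$ component) of $\Theta_1$, and you match the drift through the same identity $\nabla_u\cdot(|u|^2\Pi_{ij})=-(d-1)u$ with $u=v_i-v_j$. The mass bookkeeping you flag as the main obstacle is harmless, because $\phi_i$ depends only on $v_i$ and $\phi_j$ only on $v_j$, so no mixed Hessian terms appear and $m_j^2D^2_{v_i}\phi_i+m_i^2D^2_{v_j}\phi_j=(m_im_j)^2\,D\otimes D\,(\phi_i+\phi_j)$ holds directly with your $D=m_i^{-1}\nabla_{v_i}-m_j^{-1}\nabla_{v_j}$.
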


Let $Q^{\aB,\varepsilon} (F,F)$ be the collision operator given by \eqref{B:Q-ij} associated to kernel $B^\varepsilon$ \eqref{def:B-varepsilon}. By Lemma \ref{lem:grazing}, we have 
\begin{align*}
    Q^{\aB,\varepsilon}(F,F)\to \QL(F,F).
\end{align*}
Indeed, by weak formulation \eqref{weak-B}, we have 
{\begin{equation}
\label{sum}
\begin{aligned}
&\lim_{\varepsilon\to0}\big\langle Q^{\aB,\varepsilon}(F,F),\Phi\big\rangle\\ =&{}\lim_{\varepsilon\to0}\frac12\sum_{i,j=1}^N\int_{\R^{3d}}\alpha_{ij} f_if_j\int_{0}^{{\varepsilon/2}} \beta^\varepsilon_{ij} \int_{S^{d-2}_{k_{ij}^\perp}}\tau_i'\tau_j'\overline\nabla^{ij} \Phi \\
=&{}\frac12\sum_{i,j=1}^N\frac{|S^{d-2}|(m_im_j)^2}{2(d-1)(m_i+m_j)^2}\int_{0}^{{\varepsilon/2}}\theta_{ij}^2\beta^\varepsilon_{ij}\times\\
&\Big(\int_{\R^{3d}}\alpha_{ij} |v_i-v_j|^2\big(f_if_j (m_i^{-1}\nabla_{v_i}-m_j^{-1}\nabla_{v_j})\tau_i\tau_j\\
&-\tau_i\tau_j (m_i^{-1}\nabla_{v_i}-m_j^{-1}\nabla_{v_j})f_if_j\big)\cdot \Pi_{ij}(m_i^{-1}\nabla_{v_i}\phi_i-m_j^{-1}\nabla_{v_j}\phi_j)\\
=&{}-\frac12\sum_{i,j=1}^N\int_{\R^{3d}}\alpha_{ij}|v_i-v_j|^2 \widetilde\nabla^{ij} \Phi\cdot\Big(f_if_j\tau_i\tau_j \wn^{ij}\dd\cH(F)\Big)\\
=&{}\langle \QL (F,F),\Phi\rangle,
\end{aligned}
\end{equation}}
where the Landau collision kernel is indeed given by \eqref{A}
\begin{align*}
    A_{ij}(|v_i-v_j|)=\frac{|v_i-v_j|^2\alpha_{ij}(|v_i-v_j|)}{m_i}.
\end{align*}
In the above, we use the fact $[\wn^{ij},|v_i-v_j| ]=0$ and the angular momentum assumption \eqref{mome-A}.

We left to show Lemma \ref{lem:grazing}.
\begin{proof}[Proof of Lemma \ref{lem:grazing}] We sketch the proof by following \cite{villani1998new,DH25C} with appropriated modification.

In the following, we fix a pair $i,\,j=1,\dots,N$. For notation convenience, we write $\theta=\theta_{ij}$ and $k=k_{ij}$. We define 
\begin{align*}
   c_{ij}=\frac{m_i}{m_i+m_j}\quad\text{and} \quad  c_{j}=\frac{m_j}{m_i+m_j}.
\end{align*}
By definition, we have 
\begin{align*}
v_i'-v_i=c_{ji}|v_i-v_j|(\omega-k)\quad\text{and}\quad v_j'-v_j=-c_{ij}|v_i-v_j|(\omega-k).   
\end{align*}
By Taylor expansion, $\overline\nabla^{ij}\Phi$ can be written as
\begin{align*}
\overline\nabla^{ij}\Phi&=|v_i-v_j|\underbrace{(\omega-k)\cdot\big(c_{ji}\nabla_{v_i}\phi_i-c_{ij}\nabla_{v_j}\phi_j\big)}_{=:\Theta_1\sim|\theta|}\\
&+\frac{|v_i-v_j|^2}{(m_i+m_j)^2}\underbrace{(\omega-k)\otimes (w-k):T}_{=:\Theta_2\sim\theta^2},\\
\tau_i'\tau_j'&=\tau_i\tau_j+|v_i-v_j|\underbrace{(\omega-k)\cdot (c_{ji}\nabla_{v_i}-c_{ij}\nabla_{v_j})\tau_i\tau_j}_{=:\Theta_3\sim |\theta|}+O(|\omega-k|^2),
\end{align*}
where $T$ is given by 
\begin{align*}
T&:=\int_0^1t\int_0^1\Big(m_j^2D^2_{v_i}\phi_i\big(x,s(tv_i'+(1-t)v)+(1-s)v_i\big)\\
&+m_i^2D^2_{v_j}\phi_j\big(x,s(tv_j'+(1-t)v)+(1-s)v_j\big)\Big)\dd s\dd t.
\end{align*}
The definition of the deviation angle \eqref{angle} implies that 
\begin{equation}
\label{omega-k}
\omega-k= k(\cos \theta-1)+\gamma\sin \theta, 
\end{equation}
where $\gamma=\Pi_{k_{ij}^\perp}(\omega-k) \in S^{d-2}_{ k_{ij}^\perp}$, $(\cos \theta-1)=-\frac{ \theta^2}{2}+o(\theta^2)$ and $\sin \theta= \theta+o( \theta)$.

By using \cite[Proposition 3.2]{DGH25}
\begin{equation}
\label{perp}
\begin{aligned}
\int_{S^{d-2}_{k_{ij}^\perp}} (\omega- k)\otimes(\omega-k) =\frac{|S^{d-2}|}{d-1}\Pi_{k_{ij}^\perp}+o(\theta^2),
\end{aligned}
\end{equation}
we have 
\begin{equation*}
\begin{aligned} &\lim_{\theta\to0}\theta^{-2}\int_{S^{d-2}_{\hat k_{ij}^\perp}}   \Theta_1\Theta_3\\
 =&{}\lim_{\theta\to0}\theta^{-2}\int_{S^{d-2}_{ k_{ij}^\perp}}   (\omega-k)\otimes (\omega- k):\big(c_{ji}\nabla_{v_i}-c_{ij}\nabla_{v_j}\big)\tau_i\tau_j\otimes \big(c_{ji}\nabla_{v_i}\phi_i-c_{ij}\nabla_{v_j}\phi_j\big)\\
  =&{}\frac{|S^{d-2}|\theta^2}{(d-1)}\big(c_{ji}\nabla_{v_i}-c_{ij}\nabla_{v_j}\big)\tau_i\tau_j \cdot\Pi_{ k_{ij}^\perp}\big(c_{ji}\nabla_{v_i}\phi_i-c_{ij}\nabla_{v_j}\phi_j\big).
\end{aligned}
\end{equation*}

Then to show Lemma \ref{lem:grazing}, we only need to show that
\begin{equation}
\label{int-0}
\begin{aligned}
&\lim_{\theta\to0}\theta^{-2}\int_{S^{d-2}_{k_{ij}^\perp}}\overline\nabla^{ij}\Phi \\
=&{}\lim_{\theta\to0}\theta^{-2}\int_{S^{d-2}_{k_{ij}^\perp}}|v_i-v_j| \Theta_1+\frac{|v_i-v_j|^2}{(m_i+m_j)^2}\Theta_2\\
=&{}\frac{|S^{d-2}|}{2(d-1)}(c_{ji}\nabla_{v_i}-c_{ij}\nabla_{v_j})\cdot (|v_i-v_j|^2\Pi_{ij})\cdot \big(c_{ji}\nabla_{v_i}\phi_i-c_{ij}\nabla_{v_j}\phi_j\big).
\end{aligned}    
\end{equation}
Concerning the $\Theta_1$ term, \eqref{omega-k} implies that 
\begin{equation*}
\begin{aligned}
    &\int_{S^{d-2}_{k_{ij}^\perp}}\Theta_1=\int_{S^{d-2}_{k_{ij}^\perp}}(\cos\theta-1)N_k+\sin\theta N_\gamma,
\end{aligned}
\end{equation*}
where $N_k$ and $N_\gamma$ denote the projections 
\begin{align*}
N_k=\big(c_{ji}\nabla_{v_i}\phi_i-c_{ij}\nabla_{v_j}\phi_j\big)\cdot k\quad\text{and}\quad  N_\gamma=\big(c_{ji}\nabla_{v_i}\phi_i-c_{ij}\nabla_{v_j}\phi_j\big)\cdot \gamma. \end{align*}
Notice that $N_k$ is independent of $\gamma\in S^{d-2}_{k_{ij}^\perp}$, and $\int_{S^{d-2}_{k_{ij}^\perp}} N_\gamma\dd \gamma=0$.
Hence, we have
\begin{equation}
\label{int-1}
\begin{aligned}
  &\lim_{\theta\to0}\theta^{-2}|v_i-v_j|\int_{S^{d-2}_{k_{ij}^\perp}}\Theta_1 \\
  =&{}-\frac{|S^{d-2}|}{2}(v_i-v_j)\cdot \big(c_{ji}\nabla_{v_i}\phi_i-c_{ij}\nabla_{v_j}\phi_j\big)\\
  =&{}\frac{|S^{d-2}|}{2(d-1)}\big((c_{ji}\nabla_{v_i}-c_{ij}\nabla_{v_j})\cdot (|v_i-v_j|^2\Pi_{k_{ij}^\perp})\big)\cdot \big(c_{ji}\nabla_{v_i}\phi_i-c_{ij}\nabla_{v_j}\phi_j\big),
\end{aligned}
\end{equation}
where to show the last equality, we use  the following identity 
\begin{align*}
(c_{ji}\nabla_{v_i}-c_{ij}\nabla_{v_j})\cdot(|v_i-v_j|^2\Pi_{k_{ij}^\perp})=-(d-1)(v_i-v_j).  
\end{align*}

Concerning the $\Theta_2$ term,   as $|\theta|\to0$, we have
\begin{align*}
T\to\frac{m_j^2D^2_{v_i}\phi_i+m_i^2D^2_{v_j}\phi^j}{2}.
\end{align*}
By using \eqref{perp}, we have 
\begin{equation}
\label{int-2}
\begin{aligned}
&\lim_{\theta\to0}\theta^{-2}\frac{|v_i-v_j|^2}{(m_i+m_j)^2}\int_{S^{d-2}_{k_{ij}^\perp}}\Theta_2\dd\gamma\\
=&{}\frac{|v_i-v_j|^2}{2(m_i+m_j)^2}\int_{S^{d-2}_{k_{ij}^\perp}}\gamma\otimes \gamma\dd \gamma:\big(m_j^2D^2_{v_i}\phi_i+m_i^2D^2_{v_j}\phi_j\big)\\
=&{}\frac{|S^{d-2}|}{2(d-1)}|v_i-v_j|^2\Pi_{k_{ij}^\perp}:(c_{ji}\nabla_{v_i}-c_{ij}\nabla_{v_j})\big(c_{ji}\nabla_{v_i}\phi_i-c_{ij}\nabla_{v_j}\phi_j\big).
\end{aligned}
\end{equation}

Hence, sum \eqref{int-1} and \eqref{int-2} implies \eqref{int-0}, where we use 
$\Pi_{k_{ij}^\perp}(c_{ji}\nabla_{v_i}-c_{ij}\nabla_{v_j})|v_i-v_j|^2=0$.

\end{proof}
\printbibliography

\end{document}